\theoremstyle{plain}
\newtheorem{theorem}{Theorem}[section]
\newtheorem{corollary}[theorem]{Corollary}
\newtheorem{lemma}[theorem]{Lemma}
\theoremstyle{definition}
\theoremstyle{remark}
\newtheorem{remark}[theorem]{Remark}
\numberwithin{equation}{section}\theoremstyle{plain}
\newcommand{\C}{{\mathcal C}}
\newcommand{\D}{{\mathcal D}}
\newcommand{\E}{{\mathcal E}}
\newcommand{\Rep}{\operatorname{Rep}}
\newcommand\Irr{\operatorname{Irr}}
\newcommand\FPdim{\operatorname{FPdim}}
\newcommand\vect{\operatorname{Vect}}
\newcommand\id{\operatorname{id}}
\newcommand\End{\operatorname{End}}
\begin{document}

\title[Classification of semisimple Hopf algebras]{On semisimple quasitriangular Hopf algebras of dimension $dq^n$}

\author[Dong]{Jingcheng Dong}
\email[Dong]{dongjc@njau.edu.cn}
%\address{College of Engineering, Nanjing Agricultural University, Nanjing 210031, China}

\author[Dai]{Li Dai}
\email[Dai]{daili1980@njau.edu.cn}
%\address{College of Engineering, Nanjing Agricultural University, Nanjing 210031, China}

\keywords{semisimple quasitriangular Hopf algebra; fusion category; fiber functor}

\subjclass[2010]{16T05; 18D10}

\date{\today}

%\thanks{This work was supported by the Natural Science Foundation of China (11201231)}

\begin{abstract}
Let $q>2$ be a prime number, $d$ be an odd square-free natural number, and $n$ be a non-negative integer. We prove that a semisimple quasitriangular Hopf algebra of dimension $dq^n$ is solvable in the sense of Etingof, Nikshych and Ostrik. In particular, if $n\leq 3$ then it is either isomorphic to $k^G$ for some abelian group $G$, or twist equivalent to a Hopf algebra which fits into a cocentral abelian exact sequence.
\end{abstract}

\maketitle

%%% ----------------------------------------------------------------------

%%% ----------------------------------------------------------------------
%\tableofcontents

\section{Introduction}\label{intro}
Let $H$ be a finite dimensional Hopf algebra, and $D(H)$ be its Drinfeld double. Then $D(H)$ is a quasitriangular Hopf algebra, and there is a Hopf algebra inclusion $H\hookrightarrow D(H)$. Hence, the classification of finite dimensional quasitriangular Hopf algebras can be viewed as a first step towards the classification of all finite-dimensional Hopf algebras.

Let $(H,R)$ be a quasitriangular Hopf algebra. Assume that ${\rm dim}H$ is square-free and odd. Natale proved that $H$ is semisimple and is a group algebra \cite{natale2006r-matrices}. Furthermore, Natale \cite{bruguieres2011exact} proved that a braided fusion category whose Frobenius-Perron dimension is odd and square-free is equivalent to the category of representations of a semisimple quasitriangular Hopf algebra, and hence to $\Rep(G)$ for some finite group $G$. The present paper is devoted to extend these results. The main technique used in this paper is the theory of fusion categories.  Our main result is the theorem below.

\begin{theorem}\label{thm00}
Let $q>2$ be a prime number, $d$ be an odd square-free natural number, and $n$ be a non-negative integer. Then

(1)\, A semisimple quasitriangular Hopf algebra of dimension $dq^n$ is solvable.

\medbreak
If $n\leq 3$ then

(2)\, A braided fusion category of Frobenius-Perron dimension $dq^n$ is equivalent to the category of representations of a semisimple quasitriangular Hopf algebra.

(3)\, A semisimple quasitriangular Hopf algebra of dimension $dq^n$ is either isomorphic to $k^G$ for some abelian group $G$, or twist equivalent to a Hopf algebra which fits into a cocentral abelian exact sequence.
\end{theorem}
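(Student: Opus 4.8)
The plan is to pass to the associated braided fusion category and to prove the three assertions there, using the dictionary between semisimple quasitriangular Hopf algebras $(H,R)$ and braided fusion categories equipped with a fiber functor: $\C=\Rep(H)$ is braided via $R$ and carries the forgetful fiber functor $\C\to\vect$, while conversely any braided fusion category with a fiber functor is $\Rep(H)$ for a semisimple quasitriangular $H$ with $\dim H=\FPdim\C$. Assertion (1) then amounts to showing that every braided fusion category $\C$ with $\FPdim\C=dq^n$ is solvable; assertion (2) to showing that every such $\C$ (for $n\le 3$) admits a fiber functor; and assertion (3) to extracting the Hopf-algebraic structure from the categorical one.

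For solvability I would first reduce to the non-degenerate case. Since $\FPdim\C=dq^n$ is odd, the M\"uger center $\C'$ is a symmetric fusion category of odd dimension, hence Tannakian, say $\C'\simeq\Rep(G)$ with $|G|$ odd; by Feit--Thompson $G$ is solvable. De-equivariantizing by this Tannakian center yields a non-degenerate braided fusion category $\C_G$ with $\C\simeq(\C_G)^G$, and since solvability is preserved by equivariantization along solvable groups it suffices to treat $\C_G$, whose dimension is again of the form $d'q^m$ with $d'$ odd and square-free. For a non-degenerate $\C$ I would induct on the number of prime divisors of the square-free part. Writing $\FPdim\C=q^a\cdot m$ with $q\nmid m$ and $m$ square-free, the coprime factorization of non-degenerate braided fusion categories (M\"uger, Drinfeld--Gelaki--Nikshych--Ostrik) splits $\C\simeq\C_{(q)}\boxtimes\C_{(m)}$ with $\FPdim\C_{(q)}=q^a$ and $\FPdim\C_{(m)}=m$. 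The factor $\C_{(q)}$ has prime-power dimension, hence is nilpotent and therefore solvable by Etingof--Nikshych--Ostrik; applying the same coprime splitting to $\C_{(m)}$ peels off non-degenerate subcategories of prime dimension, each of which is pointed and hence solvable. Since Deligne products of solvable categories are solvable, $\C$ is solvable, proving (1).

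For (2) and (3), with $n\le 3$, I would use the decomposition above together with the classification of fusion categories of dimension $q$, $q^2$, $q^3$ to control the $q$-power factor explicitly: these are nilpotent and group-theoretical, and the bound $n\le 3$ rules out the obstructions that can prevent the existence of a fiber functor. Combining a fiber functor on the $q$-power factor with the evident one on the pointed (square-free) factors, and descending through the equivariantization $\C\simeq(\C_G)^G$, produces a fiber functor on $\C$; by the dictionary this gives the semisimple quasitriangular $H$ of (2). For (3) I would read off the structure from the M\"uger center: if $\C$ is symmetric then it is a pointed Tannakian category $\Rep(G)$ with $G$ abelian, whence $H\cong k^G$; otherwise the nontrivial Tannakian center $\Rep(G)\subsetneq\C$ exhibits $H$, after a gauge twist absorbing the associator, as an abelian extension $k^{G}\to H\to \bar H$ that is cocentral, i.e.\ a cocentral abelian exact sequence.

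The main obstacle I anticipate is two-fold. First, the coprime factorization of non-degenerate braided fusion categories must be available in exactly the form used, and combined carefully with the de-equivariantization so that the arithmetic of the dimension is preserved at each step. Second, and more seriously, solvability is strictly weaker than the existence of a fiber functor, so the heart of (2)--(3) is not the reduction but the explicit verification --- possible only because $n\le 3$ --- that the $q$-power factor admits a fiber functor and that the resulting extension is cocentral and abelian; pushing beyond $n\le 3$ would require classifying fiber functors on fusion categories of dimension $q^n$, which is the genuine bottleneck.
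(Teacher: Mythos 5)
Your reduction of (1) to the non-degenerate case (odd dimension forces the M\"uger center to be Tannakian, Feit--Thompson gives solvability of the group, and solvability is preserved under equivariantization) is exactly the paper's Theorem \ref{thm403}. But the step you then rest the whole non-degenerate case on --- a ``coprime factorization'' $\C\simeq\C_{(q)}\boxtimes\C_{(m)}$ of a non-degenerate braided fusion category along the coprime factorization $\FPdim\C=q^a m$ --- is not a theorem, and it is false in general. What Drinfeld--Gelaki--Nikshych--Ostrik actually prove \cite{drinfeld2010braided} is that a \emph{nilpotent} braided fusion category is a Deligne product of braided categories of prime-power dimension; there is no converse deduction from coprimality of the dimension alone. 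Concretely, $\Rep(D(S_3))$ is non-degenerate of dimension $36$, and an odd example is $\Rep(D(\Zz_7\rtimes\Zz_3))$ of dimension $441=3^2\cdot 7^2$: neither splits as a Deligne product of categories of prime-power dimension, because such a splitting would make the category nilpotent (fusion categories of prime-power dimension are nilpotent, and nilpotency is inherited by Deligne products and by fusion subcategories \cite{gelaki2008nilpotent}), whereas these categories contain $\Rep(S_3)$, resp.\ $\Rep(\Zz_7\rtimes\Zz_3)$, which are not nilpotent. So to run your argument you would first have to prove that a non-degenerate category of dimension $dq^n$, $d$ square-free, is nilpotent --- and that is precisely the hard content of the problem, which your proposal never supplies: the square-freeness of $d$ enters your argument only through the false general principle. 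The paper gets around this by a different mechanism (Lemma \ref{lem401}): by \cite[Lemma 3.4]{2016DongIntegral} such a category contains a Tannakian subcategory $\Rep(\Zz_q)$; de-equivariantizing produces a faithful $\Zz_q$-grading whose trivial component is non-degenerate of dimension $dq^{n-2}$, and one inducts on $n$, using closure of solvability under extensions and equivariantizations.

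For (2) and (3) your outline is structurally close to the paper's (fiber functor on the de-equivariantization, lifted to $\C$ through the exact sequence $\Rep(G)\to\C\to\C_G$, then reconstruction), but two essential points are asserted where proofs are needed. First, a fiber functor on a pointed braided category is not ``evident'': $\vect_G^{\omega}$ admits no fiber functor when $\omega$ is nontrivial. The paper's Lemma \ref{lem1} kills $\omega$ using that $|G|$ is odd (the braiding forces $\omega$ to be trivial once ${}_2G=1$), so oddness is doing real work there; and what replaces your appeal to a ``classification in dimensions $q,q^2,q^3$'' is \cite[Corollary 3.3]{2016DongIntegral}, which says a non-degenerate integral category of dimension $dq^n$ with $n\le 3$ is pointed. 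Second, in (3) your exact sequence is written backwards: the group $G$ of the Tannakian M\"uger center must appear as the \emph{cocommutative quotient} $kG$ --- that is what cocentrality refers to --- while the commutative subalgebra is $k^{\Sigma}$ for a different abelian group $\Sigma$, namely the group of invertibles of the pointed de-equivariantization; the sequence is $k\to k^{\Sigma}\to H'\to kG\to k$ as in Corollary \ref{cor32}. Relatedly, the case $H\cong k^G$ arises in the paper from $H$ factorizable (non-degenerate implies pointed with trivial associator), not from $\C$ symmetric.
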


Let $q$ be a prime number and let $d$ be a square-free natural number. In \cite{2016DongIntegral} and \cite{2016DongNatale} we call a fusion category almost square-free if its Frobenius-Perron dimension is $dq^n$. In this paper we also call a Hopf algebra almost square-free if its dimension is $dq^n$.

This paper is organized as follows. In section 2, we recall some notions and basic results which will be used throughout. In section 3, we study the solvability of an almost square-free semisimple quasitriangular Hopf algebra with odd dimension. We also prove that this class of Hopf algebras have nontrivial $1$-dimensional representations. In section 4, we study the case when $n\leq 3$, and obtain the classification result.

Throughout, we will work over an algebraically closed field $k$ of characteristic $0$. For a finite group $G$, $kG$ denotes the group algebra of $G$ over $k$, and $k^G$ denotes the dual group algebra of $kG$. All Hopf algebras considered in this paper are finite dimensional over $k$. Our reference for Hopf algebras is \cite{1993Montgomery} and the reference for the theory of tensor (fusion) categories is \cite{egno2015}.

\section{Preliminaries}\label{prels}
\subsection{Quasitriangular Hopf algebras}
A quasitriangular structure in a Hopf algebra $H$ is an invertible element $R\in H\otimes H$, called an $R$-matrix, satisfying:

 (QT1)\, $(\Delta\otimes \id)(R)=R_{13}R_{23}$;

 (QT2)\, $(\id\otimes \Delta)(R)=R_{13}R_{12}$;

 (QT3)\, $(\varepsilon\otimes \id)(R)=1=(\id\otimes\varepsilon)(R)$;

 (QT4)\, $\Delta^{cop}(h)=R\Delta(h)R^{-1},\forall h\in H$.

 In this case, $(H,R)$ is called a quasitriangular Hopf algebra. A quasitriangular Hopf algebra $(H,R)$ is called factorizable if the map $\Phi_{R}:H^*\to H$, given by $\Phi_{R}(f)=<f,Q^{(1)}>Q^{(2)}$ for $f\in H^*$, is an isomorphism, where $Q=Q^{(1)}\otimes Q^{(2)}=R_{21}R\in H\otimes H$.

\medbreak
The $R$-matrix defines a braided structure on the category $\Rep(H)$ of finite dimensional representations of $H$. It turns out that $\Rep(H)$ is a braided tensor category. In particular, if $(H,R)$ is a semisimple factorizable Hopf algebra then $\Rep(H)$ is a non-degenerate braided fusion category \cite{Turaer1994}.

\subsection{Exact sequences of Hopf algebras}
An exact sequence of finite dimensional Hopf algebras is a sequence of Hopf algebra maps
$$k\to K\xrightarrow{i}H\xrightarrow{\pi}L\to k,$$

such that

(1)\, $i$ is injective and $\pi$ is surjective;

(2)\, $\pi\circ i=\varepsilon_K1$, where $\varepsilon_K$ is the counit of $K$;

(3)\, ${\rm ker}\pi=HK^+$.

If $K$ is commutative and $L$ is cocommutative then this exact sequence is called \emph{abelian}. If this is the case then there exist finite groups $\Sigma$ and $\Gamma$ such that $K\cong k^{\Sigma}$ and $L\cong k\Gamma$, and we have
\begin{equation}\label{eq201}
\begin{split}
k\to k^{\Sigma}\xrightarrow{i}H\xrightarrow{\pi}k\Gamma\to k.
\end{split}
\end{equation}
$H$ is also called a Kac algebra in this case.

\medbreak
If a Hopf algebra $H$ fits into an exact sequence (\ref{eq201}) then $H$ is isomorphic to a bicrossed product ${k^{\Sigma}}^{\tau}\#_{\sigma}k\Gamma$ as a Hopf algebra with respect to some normalized $2$-cocycles $\tau$ and $\sigma$.

An exact sequence (\ref{eq201}) is called central if $k^{\Sigma}$ is in the center of $H$; while it is called cocentral if its dual is central.

\begin{remark}\label{rem0}
By \cite[Lemma 3.3]{natale2008hopf}, if the exact sequence (\ref{eq201}) is central then $\sigma$ is trivial; if it is cocentral then $\tau$ is trivial.
\end{remark}

An exact sequence of Hopf algebras is a basic tool in the construction of Hopf algebras, and also plays an important role in the classification of Hopf algebras, see \cite{natale1999semisimple} and \cite{etingof2011weakly} for examples.

\subsection{Pointed fusion categories}\label{sec22}
Let $\C$ be a fusion category. We use $\Irr(\C)$ to denote the set of isomorphism classes of simple objects of $\C$. The Frobenius-Perron dimension $\FPdim(X)$ of a simple object $X\in\C$ is the Frobenius-Perron eigenvalue of the matrix of left multiplication by the class of $X$ in $\Irr(\C)$. The Frobenius-Perron dimension of $\C$ is the number $$\FPdim(\C)=\sum_{X\in\Irr(\C)}\FPdim(X)^2.$$

A fusion category $\C$ is called weakly integral if $\FPdim(\C)$ is an integer. If $\FPdim(X)$ is an integer for every $X\in\Irr(\C)$ then $\C$ is called integral.

The simplest integral fusion category is a pointed fusion category in which every simple object has Frobenius-Perron dimension $1$. If $\C$ is a pointed fusion category, then $\C$ is equivalent to the category $\vect_G^{\omega}$  of $G$-graded vector spaces with associativity constraint given by a $3$-cocycle $\omega\in H^3(G,k^{\times})$ \cite{Ostrik2003}. In particular, if $\omega=1$ then $\C$ is equivalent to the category of representations of $k^G$.

\subsection{Braided fusion categories}\label{sec23}
A braided fusion category $\C$ is a fusion category equipped with a natural isomorphism $c_{X,Y}:X\otimes Y\to Y\otimes X$, for all objects $X,Y \in \C$, satisfying the hexagon diagrams \cite{kassel1995quantum}.

A balancing transformation (or a twist) of a braided fusion category $\C$ is a natural automorphism $\theta: \id_{\C}\to \id_{\C}$ satisfying
$$\theta_{\textbf{1}} = \id_{\textbf{1}} \mbox{\, and \,} \theta_{X\otimes Y}=(\theta_X\otimes \theta_Y)\circ c_{Y,X}\circ c_{X,Y}.$$

A braided fusion category $\C$ is called premodular if it admits a twist $\theta$ satisfying $\theta_X^*=\theta_{X^*}$ for every $X\in \C$.

%The $S$-matrix of a premodular fusion category $C$ is $S = \{s_{XY} \}_{X,Y\in \Irr(C)}$, where $s_{XY}$ is the quantum trace of $c_{Y,X}\circ c_{X,Y}$. A premodular category $\C$ is called modular if the $S$-matrix is invertible.

\medbreak
Let $\C$ be a braided fusion category. If $\D$ is a fusion subcategory of  $\C$, the M\" uger centralizer $\D'$ of $\D$ in $\C$ is the full fusion subcategory generated by the set
$$\{X \in \C| c_{Y, X}c_{X, Y} =\id_{X \otimes Y}, \forall Y \in \D\}.$$

The M\"uger center of $\C$ is the M\" uger centralizer $\C'$. The braided fusion category is called non-degenerate if $\C'\cong \vect$ is trivial. A premodular category $\C$ is called modular if it is non-degenerate.

\begin{remark}\label{rem1}
In the following context, we only consider weakly integral fusion categories. Any weakly integral braided fusion category has a canonical twist \cite{etingof2005fusion}. Hence, an weakly integral braided fusion category is modular if it is non-degenerate.
\end{remark}

\medbreak
A category $\C$ is called \emph{symmetric} if $\C' = \C$. Let $G$ be a finite group. The fusion category  $\Rep(G)$ of finite dimensional representations of $G$ is a symmetric fusion category with respect to the canonical braiding. A braided fusion category $\E$ is called Tannakian if $\E \cong \Rep(G)$ for some finite group $G$, as symmetric fusion categories.

\medbreak
A fiber functor for a tensor category $\C$ over $k$ is a tensor functor $F: \C\to \vect$. By the \emph{reconstruction theorem for finite dimensional Hopf algebras}, given a fiber functor $F$ for a tensor category $\C$ over $k$, the algebra $H:=\End(F)$ has a natural structure of a finite dimensional Hopf algebra, and we have a canonical equivalence of tensor categories $\C\cong \Rep(H)$. Conversely, given a finite dimensional Hopf algebra $H$, the forgetful functor $F:\Rep(H)\to \vect$ is a fiber functor.

The lemma below is modified from \cite[Lemma 7.3]{bruguieres2011exact} which we restate for the convenience of the reader.

\begin{lemma}\label{lem1}
Let $\C$ be a braided pointed category whose Frobenius-Perron dimension is an odd integer $N$. Then there exists an abelian group $G$ of order $N$ such that $\C$ is equivalent to the category of $G$-graded vector spaces $\vect_G^1$. In particular, $\C$ has a fiber functor.
\end{lemma}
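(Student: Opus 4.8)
The plan is to reduce the statement to a cohomological triviality and then exploit the fact that $2$ is invertible modulo any odd integer. Since $\C$ is pointed, every simple object is invertible, so $\FPdim(\C)$ equals the number of isomorphism classes of simple objects; thus $|\Irr(\C)|=N$ and, by the description recalled in \S\ref{sec22}, $\C\cong\vect_G^{\omega}$ for a finite group $G$ with $|G|=N$ and some $\omega\in H^3(G,k^{\times})$. Because $\C$ carries a braiding $c$, for invertible objects $g,h$ the isomorphism $c_{g,h}\colon g\otimes h\to h\otimes g$ forces $g\otimes h\cong h\otimes g$, so $G$ is abelian; I will write it additively as $A$. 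The whole content of the lemma then becomes the assertion that $[\omega]=0$ in $H^3(A,k^{\times})$: once we know $\C\cong\vect_A^1$, \S\ref{sec22} gives $\C\cong\Rep(k^A)$, and the forgetful functor $\Rep(k^A)\to\vect$ is a fiber functor by \S\ref{sec23}, so taking $G=A$ finishes the proof.

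Next I would invoke the classification of braided pointed fusion categories. The braiding equips $A$ with the quadratic form $q(a):=c_{a,a}\in k^{\times}$ (a scalar since $a$ is invertible), whose associated form $b(a,a'):=c_{a,a'}c_{a',a}$ is a symmetric bicharacter. By the Eilenberg--MacLane description of abelian cohomology $H^3_{\mathrm{ab}}(A,k^{\times})$ (equivalently, the classification of braided pointed fusion categories with group of invertibles $A$ by the pre-metric group $(A,q)$), two such categories are braided equivalent if and only if they have the same quadratic form $q$. I would cite this classification as the conceptual input and not reprove it.

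The key step, where oddness is essential, is to realize the same $q$ over the \emph{trivial} associator. With $\omega=1$ the hexagon axioms say precisely that a braiding on $\vect_A^1$ is a bicharacter $A\times A\to k^{\times}$. I would build one with the right diagonal as follows. Since $q$ is quadratic, $q(na)=q(a)^{n^2}$, so $b(a,a)=q(2a)q(a)^{-2}=q(a)^2$ and $q(a)^{m^2}=q(ma)=1$, where $m$ is the (odd) exponent of $A$; hence each $q(a)$ is a root of unity of order dividing $m^2$. Choosing $s\in\Zz$ with $2s\equiv 1\pmod{m^2}$ (possible as $m$ is odd) and setting $c_0(a,a'):=b(a,a')^{s}$ yields a bicharacter with $c_0(a,a)=b(a,a)^{s}=q(a)^{2s}=q(a)$. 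Thus $(\vect_A^1,c_0)$ has quadratic form $q$, so by the classification it is braided equivalent to $\C$; in particular $\C\cong\vect_A^1$ as fusion categories, i.e. $[\omega]=0$, and the fiber functor follows as above. The main obstacle is exactly this diagonalization: inverting $2$ modulo $m^2$ is what lets a quadratic form be represented by a bicharacter, and it fails in even dimension (for instance the semion category over $\Zz/2$ has a nontrivial associator), so the hypothesis that $N$ is odd must be used precisely here; the only other genuine ingredient is the Eilenberg--MacLane classification cited in the second step.
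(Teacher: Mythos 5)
Your proof is correct, but it takes a genuinely different route than the paper's. The paper also begins with $\C\cong\vect_G^{\omega}$ and gets abelianness of $G$ from the braiding, but it then quotes the Fr\"ohlich--Kerler classification \cite[Section 7.5]{frohlich1993} in a very specific form: for a given twist $\theta$, the class $\omega$ is trivial if and only if $\theta$ is trivial on the $2$-torsion subgroup ${}_2G=\{g\in G: g^2=1\}$; since $|G|$ is odd, ${}_2G$ is trivial and $\omega$ vanishes at once. So the paper's mechanism for exploiting oddness is group-theoretic (no $2$-torsion), and it implicitly relies on the canonical twist that any weakly integral braided fusion category carries (Remark \ref{rem1}). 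You instead invoke the Eilenberg--MacLane classification of braided pointed categories by pre-metric groups $(A,q)$ (as in \cite[\S 2.11]{drinfeld2010braided} or \cite[\S 8.4]{egno2015}) and make oddness do arithmetic work: inverting $2$ modulo $m^2$ lets you write $q$ as the diagonal of the bicharacter $b^{s}$, which is a braiding on $\vect_A^1$, and the classification then forces $\C\simeq\vect_A^1$. Your route buys three things: it never needs a twist or premodular structure, only the braiding itself; it isolates exactly where oddness enters, with your semion remark showing the hypothesis is sharp; and it rests on the more widely used reference point (quadratic forms) rather than the Fr\"ohlich--Kerler criterion. The paper's route buys brevity: the $2$-torsion criterion disposes of the odd case in one line with no construction. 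One presentational point: your derivation of $b(a,a)=q(a)^2$ from $q(na)=q(a)^{n^2}$ is slightly circular, since the latter identity is normally proved using the former; it is cleaner to get $b(a,a)=q(a)^2$ directly from $q(-a)=q(a)$ via $1=q(0)=b(a,-a)\,q(a)\,q(-a)=b(a,a)^{-1}q(a)^2$, after which $q(na)=q(a)^{n^2}$ follows by induction and the rest of your argument is unchanged.
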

\begin{proof}
Since $\C$ is pointed, $\C=\vect_G^{\omega}$ for some finite group $G$ and some cohomology class $\omega\in H^3(G,k^{\times})$.

The braided and premodular structures on a pointed fusion category are classified in \cite[Section 7.5]{frohlich1993} in terms of cohomology. In particular, for a given twist $\theta$ on $\C$, the cohomology class $\omega$ is trivial if and only if $\theta$ is $1$ on the subgroup ${}_2G=\{g\in G:g^2=1\}$. Since the order of $G$ is odd, ${}_2G$ must be trivial and hence $\omega$ is trivial. Finally, the existence of a braiding implies that $G$ is abelian.
\end{proof}

\medbreak
Let $\C$ be a braided fusion category, and $\Rep(G)$ be a Tannakian subcategory of $\C$. Following the procedure described in \cite[Section 4.4]{drinfeld2010braided}, we can get a new fusion category $\C_G$, called the de-equivariantization of $\C$ by $\Rep(G)$. Conversely, the fusion category $\C_G$ admits an action of $G$ by tensor autoequivalences, and this action gives rise to a new fusion category $(\C_G)^G$ such that $\C\cong (\C_G)^G$. By \cite[Proposition 4.26]{drinfeld2010braided}, we have
\begin{equation}\label{eq112}
\begin{split}
\FPdim(\C_G)=\frac{\FPdim(\C)}{|G|}.
\end{split}
\end{equation}

Let $\Rep(G)$ be a Tannakian subcategory of $\C$. Then the de-equivariantization $\C_G$ of $\C$ by $\Rep(G)$ admits a $G$-grading $\C_{G}=\oplus_{g\in G}(\C_G)_g$. The trivial component $(\C_G)_e$ of this grading is also a braided fusion category. The category $\C$ is non-degenerate if and only if $(\C_G)_e$ is non-degenerate and the grading is faithful, that is, $\C_g\neq 0$ for all $g\in G$ \cite[Proposition 4.56]{drinfeld2010braided}.

\medbreak
The notion of an exact sequence of tensor categories was introduced and studied in \cite{bruguieres2011exact}. By definition, an exact sequence of tensor categories is a diagram of tensor functors $\C'\to \C\to \C''$ satisfying certain conditions. In particular, given a Tannakian subcategory $\Rep(G)$ of a braided fusion category $\C$, we have an exact sequence of fusion categories (see \cite[Section 1]{bruguieres2011exact}):
$$\Rep(G)\to \C\to \C_G,$$
where $\C_G$ is the de-equivariantization of $\C$ by $\Rep(G)$.

\section{Solvability of a semisimple quasitriangular Hopf algebra}
In this section, we assume that $q$ is a prime number, $d$ is a square-free natural number and $n$ is a non-negative integer. We also assume that $\C$ is a braided fusion category of Frobenius-Perron dimension $dq^n$.

The notion of solvability of a fusion category was introduced in \cite{etingof2011weakly}. In this paper, we call a semisimple Hopf algebra solvable if its representation category is solvable.

\begin{lemma}\label{lem401}
Assume that $\C$ is non-degenerate. Then $\C$ is solvable.
\end{lemma}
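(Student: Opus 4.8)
The plan is to induct on $\FPdim(\C)=dq^n$. Since $\C$ is weakly integral and non-degenerate, Remark~\ref{rem1} shows that $\C$ is modular, so I may freely use the M\"uger-centralizer calculus: $\FPdim(\D)\FPdim(\D')=\FPdim(\C)$ and $\D''=\D$ for every fusion subcategory $\D\subseteq\C$. The case $\FPdim(\C)=1$ is trivial, and the inductive step divides into a pointed and a non-pointed case.

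If $\C$ is pointed, then its simple objects are exactly the invertible ones, and in a braided category the group $G$ of invertible objects is abelian because the braiding gives $g\otimes h\cong h\otimes g$. Hence $\C\cong\vect_G^{\omega}$ with $G$ abelian, and such a category is solvable since abelian groups are solvable; this disposes of the pointed case. (If one wishes, Lemma~\ref{lem1} further shows $\omega$ is trivial when the dimension is odd, but this is not needed here.)

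If $\C$ is not pointed, I would produce a nontrivial Tannakian subcategory $\Rep(G)\subseteq\C$ and de-equivariantize. Because $|G|$ divides the odd integer $\FPdim(\C)$, the group $G$ has odd order and is therefore solvable by the Feit--Thompson theorem. Passing to the de-equivariantization $\C_G$ divides the dimension by $|G|$ via \eqref{eq112}, and since $\C$ is non-degenerate, \cite[Proposition 4.56]{drinfeld2010braided} guarantees that the $G$-grading of $\C_G$ is faithful and that its trivial component $(\C_G)_e$ is again non-degenerate, of dimension $\FPdim(\C)/|G|^2<\FPdim(\C)$. By the inductive hypothesis $(\C_G)_e$ is solvable. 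Now $\C_G$ is a faithful $G$-extension of $(\C_G)_e$, and $\C\cong(\C_G)^G$ is its $G$-equivariantization; as the class of solvable fusion categories is closed under extensions and equivariantizations by the solvable group $G$ \cite{etingof2011weakly}, it follows that $\C$ is solvable, closing the induction.

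The one genuinely nontrivial input, and the step I expect to be the main obstacle, is the existence of a nontrivial Tannakian subcategory in the non-pointed case. I would approach it in two moves. First, reduce to integrality: a weakly integral fusion category admits a faithful grading by an elementary abelian $2$-group with integral trivial component, so an odd-dimensional $\C$ is automatically integral. Second, in an integral non-degenerate braided category of odd dimension every symmetric subcategory is automatically Tannakian --- the super-Tannakian alternative requires an object of order two and is excluded by oddness --- so it suffices to exhibit a nontrivial symmetric subcategory. To guarantee one I would combine the M\"uger decomposition $\C\cong\C_1\boxtimes\cdots\boxtimes\C_r$ into prime modular categories with the dimension constraint coming from $d$ being square-free: a factor of square-free dimension is $\Rep(G)$ by Natale's theorem recalled in the Introduction, a factor of dimension a power of $q$ is nilpotent, and a factor of dimension $p^aq^b$ is solvable by \cite{etingof2011weakly}. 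The hard case is a prime factor of dimension $q^a p_{i_1}\cdots p_{i_k}$ with $k\geq 2$, in which every simple object has $q$-power dimension; locating the required symmetric subcategory intrinsically there --- or appealing to the integral almost-square-free structure results of \cite{2016DongIntegral,2016DongNatale} --- is the crux of the argument.
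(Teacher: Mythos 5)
Your proposal has two genuine gaps, one of hypothesis and one of substance.

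First, the lemma does not assume that $\FPdim(\C)$ is odd. The standing hypotheses of Section 3 are only that $q$ is a prime and $d$ is square-free; oddness enters the paper only later, in Theorem \ref{thm403} and Corollaries \ref{cor405} and \ref{cor406}. Your argument uses oddness in three load-bearing places: Feit--Thompson to make the group $G$ solvable, the reduction ``odd weakly integral $\Rightarrow$ integral'', and the claim that every symmetric subcategory is Tannakian because there are no objects of order two. All three fail when $dq^n$ is even (e.g.\ $q=2$, or $d$ even), and that is exactly the case the paper must treat separately: there a non-integral $\C$ is an extension of an integral braided subcategory $\D$ by an elementary abelian $2$-group \cite[Theorem 3.10]{gelaki2008nilpotent}; non-degeneracy gives $\FPdim(X)^2 \mid dq^n$ for every $X \in \Irr(\D)$ \cite[Lemma 1.2]{etingof1998some}; square-freeness of $d$ then forces each such $\FPdim(X)$ to be a power of $q$, so $\D$ is solvable by \cite[Corollary 3.5]{dong2014existence}, and $\C$ is solvable as an extension. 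Nothing in your proposal substitutes for this branch of the argument.

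Second, even restricted to the odd case, your induction never gets started: the existence of a nontrivial Tannakian subcategory in a non-pointed, integral, non-degenerate $\C$ is the entire content of the lemma, and you leave it open, correctly calling it ``the crux.'' Your M\"uger prime-decomposition sketch does not close the case you yourself flag (a prime factor of dimension $q^a p_{i_1}\cdots p_{i_k}$ with $k\geq 2$); note also that within that sketch a \emph{non-degenerate} factor of square-free dimension cannot literally be $\Rep(G)$ as a symmetric category unless it is trivial, so even that step needs restating. The paper's proof of the integral case consists precisely of invoking \cite[Lemma 3.4]{2016DongIntegral}, which supplies a Tannakian subcategory equivalent to $\Rep(\Zz_q)$, after which de-equivariantization, non-degeneracy of the trivial component \cite[Proposition 4.56]{drinfeld2010braided}, equation (\ref{eq112}), induction (on $n$, where yours is on total dimension --- an immaterial difference), and closure of solvability under extensions and equivariantizations \cite[Proposition 4.5]{etingof2011weakly} finish exactly as in your third paragraph. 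So what you have written is, in effect, a correct reduction of the odd-dimensional case to that external lemma, not a proof of the statement.
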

\begin{proof}
Assume first that $\C$ is integral. By \cite[Lemma 3.4]{2016DongIntegral}, $\C$ has a Tannakian subcategory equivalent to $\Rep(\mathbb{Z}_q)$. Let $\C_{\mathbb{Z}_q}$ be the de-equivariantization of $\C$ by $\Rep(\mathbb{Z}_q)$. Then $\C_{\mathbb{Z}_q}=\oplus_{g\in\mathbb{Z}_q}(\C_{\mathbb{Z}_q})_g$ is a grading of $\C_{\mathbb{Z}_q}$. Since $\C$ is non-degenerate, this grading is faithful and the trivial component $(\C_{\mathbb{Z}_q})_e$ is also non-degenerate. In addition, $(\C_{\mathbb{Z}_q})_e$ has Frobenius-Perron dimension $dq^{n-2}$ by equations (\ref{eq112}). By induction on $n$, $(\C_{\mathbb{Z}_q})_e$ is a solvable fusion category. Because the class of solvable fusion categories is closed under taking extensions and equivariantizations by solvable groups \cite[Proposition 4.5]{etingof2011weakly}, $\C_{\mathbb{Z}_q}$ and hence $\C$ are both solvable.

We then assume that $\C$ is not integral. In this case, $\C$ is a $G$-extension of an integral fusion subcategory $\D$, where $G$ is an elementary abelian $2$-group \cite[Theorem 3.10]{gelaki2008nilpotent}. Since $\C$ is non-degenerate, \cite[Lemma 1.2]{etingof1998some} shows that the square of $\FPdim(X)$ divides $\FPdim(\C)$, for every $X\in \Irr(\D)$. In addition, $\D$ is an integral braided fusion category. All these facts show that $\FPdim(X)$ is a power of $q$. So $\D$ is solvable by \cite[Corollary 3.5]{dong2014existence}, see also \cite[Theorem 7.2]{natale2013weakly}. It follows that $\C$ is solvable, since it is an extension of a solvable fusion category by a solvable group.
\end{proof}

\begin{remark}\label{rem402}
This result was also obtained by Natale in \cite{natale2013weakly} by different method.
\end{remark}

\begin{theorem}\label{thm403}
Assume that $\C$ is degenerate and has odd dimension. Then $\C$ is solvable.
\end{theorem}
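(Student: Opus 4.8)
The plan is to reduce to the non-degenerate situation already settled in Lemma~\ref{lem401}, by de-equivariantizing $\C$ with respect to the Tannakian subcategory supplied by its M\"{u}ger center and then recovering $\C$ as an equivariantization. Since $\C$ is degenerate, its M\"{u}ger center $\C'$ is a nontrivial symmetric fusion subcategory, and its Frobenius--Perron dimension divides $\FPdim(\C)=dq^n$ and is therefore odd. A symmetric fusion category of odd dimension must be Tannakian: by Deligne's description a symmetric category is the representation category of a supergroup $(G,z)$ with $z$ central of order dividing $2$, and a nontrivial $z$ would force $2\mid|G|=\FPdim(\C')$, contradicting oddness. Hence $\C'\cong\Rep(G)$ for some nontrivial finite group $G$ of odd order with $|G|\mid dq^n$.

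My next step is to de-equivariantize $\C$ by the Tannakian subcategory $\Rep(G)=\C'$. Because $\FPdim(\C)$ is an integer, $\C$ is weakly integral and thus carries the canonical twist of Remark~\ref{rem1}, so it is premodular. The de-equivariantization $\C_G$ is then a braided fusion category with $\FPdim(\C_G)=dq^n/|G|$ by~\eqref{eq112}. The crucial point is that $\C_G$ is non-degenerate: since $\Rep(G)$ is exactly the M\"{u}ger center and de-equivariantization commutes with taking centralizers \cite{drinfeld2010braided}, one computes $(\C_G)'\cong(\C')_G=\Rep(G)_G\cong\vect$ (this is the modularization of the premodular category $\C$). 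Writing $dq^n/|G|=d'q^{n'}$ with $d'$ odd and square-free and $n'\geq 0$, the category $\C_G$ meets the hypotheses of Lemma~\ref{lem401}, and is therefore solvable.

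Finally I would recover $\C$ as the equivariantization $\C\cong(\C_G)^G$. As $|G|$ is odd, $G$ is solvable by the Feit--Thompson theorem, and the class of solvable fusion categories is closed under equivariantization by solvable groups \cite[Proposition~4.5]{etingof2011weakly}. Consequently $\C$ is solvable, which is exactly what is wanted.

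The step I expect to be the main obstacle is verifying that $\C_G$ is non-degenerate, i.e. that de-equivariantizing by the \emph{entire} M\"{u}ger center removes all degeneracy so that $(\C_G)'$ is trivial; this is the content of M\"{u}ger's modularization theorem and must be invoked carefully to guarantee that Lemma~\ref{lem401} applies. Everything else is routine dimension bookkeeping together with the stability of solvability under (de-)equivariantization.
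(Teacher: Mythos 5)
Your proposal is correct and takes essentially the same route as the paper: recognize the M\"uger center as a Tannakian subcategory $\Rep(G)$ with $G$ of odd order (the paper cites \cite[Corollary 2.50]{drinfeld2010braided}, which is exactly the Deligne-type argument you spell out), de-equivariantize to get a non-degenerate braided fusion category to which Lemma~\ref{lem401} applies, and recover $\C$ as an equivariantization by the odd-order, hence solvable, group $G$. The only differences are cosmetic: you make explicit the dimension bookkeeping, the appeal to Feit--Thompson, and the modularization argument for non-degeneracy of $\C_G$, where the paper instead cites \cite[Remark 2.3]{etingof2011weakly}.
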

\begin{proof}
Since $\C$ is degenerate and $\FPdim(\C)$ is odd, the M\"{u}ger center $\C'$ of $\C$ is a Tannakian subcategory by \cite[Corollary 2.50]{drinfeld2010braided}. So there is a finite group $G$ such that $\C'\cong \Rep(G)$. In addition, $G$ is a solvable group since its order is odd. Let $\C_{G}$ be the de-equivariantization of $\C$ by $\Rep(G)$. By \cite[Remark 2.3]{etingof2011weakly}, $\C_{G}$ is a non-degenerate fusion category. The Lemma \ref{lem401} shows that $\C_{G}$ is a solvable fusion category. It follows that $\C$, being an equivariantization of a solvable fusion category by a solvable group, is a solvable fusion category.
\end{proof}

\begin{remark}\label{rem404}
Note that a degenerate braided fusion category of Frobenius-Perron dimension $dq^n$ which is even may not be solvable. For example, let $\mathbb{A}_5$ be the alternating group of order $60=2^2\times 3\times 5$, and let $\Rep(\mathbb{A}_5)$ be the category of finite dimensional representations of $\mathbb{A}_5$. Since $\mathbb{A}_5$ is a simple group, $\Rep(\mathbb{A}_5)$ is not solvable by \cite[Proposition 4.5]{etingof2011weakly}.
\end{remark}

\begin{corollary}\label{cor405}
Let $H$ be a semisimple quasitriangular Hopf algebra of dimension $dq^n$. Assume that $\FPdim(\C)$ is odd. Then $H$ is solvable.
\end{corollary}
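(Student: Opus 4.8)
The plan is to translate the statement about $H$ into one about its representation category and then appeal directly to the two preceding results of this section. Since $(H,R)$ is a semisimple quasitriangular Hopf algebra, the category $\C := \Rep(H)$ is a braided fusion category, and its Frobenius-Perron dimension equals $\dim H = dq^n$, which is odd by hypothesis. By the definition adopted in this section, $H$ is solvable precisely when $\C$ is a solvable fusion category, so it suffices to prove that $\C$ is solvable.

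I would then split into the two exhaustive cases according to the M\"uger center $\C'$. If $\C$ is non-degenerate, then Lemma \ref{lem401} applies directly and yields that $\C$ is solvable; note that this case does not even require the oddness hypothesis. If instead $\C$ is degenerate, then since $\FPdim(\C) = dq^n$ is odd, Theorem \ref{thm403} applies and again gives that $\C$ is solvable. Because these two cases exhaust all possibilities, $\C$ is solvable in either situation, and therefore $H$ is solvable.

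In this argument there is no genuine obstacle: the substantive work has already been carried out in Lemma \ref{lem401} and Theorem \ref{thm403}, and the corollary is essentially a matter of packaging them into a single case distinction. The only point requiring care is to check that the hypotheses of each result are met, and in particular to locate precisely where oddness enters. It enters only in the degenerate case, where oddness of $\FPdim(\C)$ is exactly what guarantees (via \cite[Corollary 2.50]{drinfeld2010braided}) that the M\"uger center $\C'$ is Tannakian, which is the input Theorem \ref{thm403} needs. I would accordingly write the proof as a brief two-line argument invoking Lemma \ref{lem401} and Theorem \ref{thm403} in turn.
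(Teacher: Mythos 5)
Your proposal is correct and follows essentially the same route as the paper: pass to $\C=\Rep(H)$, split into the non-degenerate and degenerate cases, and invoke Lemma \ref{lem401} and Theorem \ref{thm403} respectively. The only cosmetic difference is that the paper phrases the dichotomy as ``$H$ factorizable or not'' (using that factorizability of $H$ corresponds to non-degeneracy of $\C$), whereas you split directly on the degeneracy of $\C$, which is trivially exhaustive; the substance is identical.
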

\begin{proof}
Let $\C=\Rep(H)$ be the category of representations of $H$. Then $\C$ is a braided fusion category. If $H$ is factorizable then $\C$ is non-degenerate, and hence the corollary follows from Lemma \ref{lem401}. If $H$ is not factorizable then $\C$ is degenerate, and hence the corollary follows from Theorem \ref{thm403}.
\end{proof}

\begin{corollary}\label{cor406}
Let $H$ be a semisimple quasitriangular Hopf algebra of dimension $dq^n$. Assume that $\FPdim(\C)$ is odd. Then $H$ has a nontrivial $1$-dimensional representation.
\end{corollary}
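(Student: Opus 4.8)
The plan is to translate the statement into the language of $\C=\Rep(H)$ and then exploit the solvability already established in Corollary \ref{cor405}. First I would record the dictionary between one-dimensional representations of $H$ and invertible objects of $\C$: a one-dimensional $H$-module is the same thing as an algebra homomorphism $H\to k$, equivalently a grouplike element of $H^*$, and in $\Rep(H)$ these are exactly the simple objects of Frobenius-Perron dimension $1$, i.e. the invertible objects, with the trivial representation $\varepsilon$ corresponding to the unit object $\1$. Hence the corollary is equivalent to the assertion that $\C$ has an invertible object different from $\1$ (and we may assume $\dim H>1$, since otherwise there is nothing to prove). By Corollary \ref{cor405}, since $\FPdim(\C)=\dim H=dq^n$ is odd, $\C$ is solvable.

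The heart of the argument is then the following structural fact, which I would either cite from \cite{etingof2011weakly} or prove directly: every solvable fusion category $\C$ with $\FPdim(\C)>1$ contains a nontrivial invertible object. To prove it directly I would induct on the length of a realizing series, in which $\C$ is built from $\vect$ by successive $\mathbb{Z}_p$-extensions and $\mathbb{Z}_p$-equivariantizations, and inspect the final step. If the last step is an equivariantization $\C=\D^{\mathbb{Z}_p}$, then $\Rep(\mathbb{Z}_p)$ sits inside $\C$ as a fusion subcategory, and its $p>1$ simple objects are all invertible, so $\C$ acquires a nontrivial invertible object. If the last step is a $\mathbb{Z}_p$-extension $\C=\oplus_{g\in\mathbb{Z}_p}\C_g$ with $\C_e=\D$, then either $\D=\vect$, in which case $\C$ is a $\mathbb{Z}_p$-extension of $\vect$ and is therefore pointed with $p$ invertible objects, or $\D\neq\vect$ is solvable with a strictly shorter series and hence, by the inductive hypothesis, already has a nontrivial invertible object; since $\D=\C_e$ is a fusion subcategory of $\C$, such an object stays invertible in $\C$. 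In all cases $\C$ has a nontrivial invertible object.

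Applying this to $\C=\Rep(H)$ and reading back through the dictionary produces a nontrivial one-dimensional representation of $H$, which is exactly what is wanted. I expect the only real obstacle to be the structural fact in the middle paragraph; the rest is the routine correspondence between one-dimensional modules and invertible objects together with the solvability supplied by Corollary \ref{cor405}. In particular one must handle the inductive bookkeeping carefully, making sure that invertible objects are preserved under passage to the trivial component of a grading and that the equivariantization step genuinely contributes the invertible objects of $\Rep(\mathbb{Z}_p)$.
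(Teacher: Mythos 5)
Your proposal is correct and follows essentially the same route as the paper: invoke Corollary \ref{cor405} for solvability, extract a nontrivial invertible object of $\C=\Rep(H)$ via the structural fact you identify (which is exactly \cite[Proposition 4.5]{etingof2011weakly}, the result the paper cites; your inductive proof of it is a sound reproduction of the argument there), and then translate invertible objects into one-dimensional representations. The paper packages this last step as a Hopf quotient $H\to k^{\Sigma}$ giving $k\Sigma\subseteq kG(H^*)\subseteq H^*$, but that is the same dictionary between invertible objects of $\Rep(H)$ and grouplike elements of $H^*$ that you use.
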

\begin{proof}
Let $\C=\Rep(H)$ be the category of representations of $H$. Then $\C$ is solvable by Corollary \ref{cor405}. By \cite[Proposition 4.5]{etingof2011weakly}, $\C$ contains a nontrivial invertible object $\delta$. Let $\D$ be the fusion subcategory generated by $\delta$. Then $\D\cong \Rep(k^{\Sigma})$ for some cyclic group $\Sigma$. Hence we have a Hopf quotient $H\to k^{\Sigma}$. This implies that $k\Sigma\subseteq kG(H^*)\subseteq H^*$ which completes the proof.
\end{proof}

\section{Classification of a semisimple quasitriangular Hopf algebra}\label{mainResults}
\begin{theorem}\label{thm1}
Let $\C$ be a braided fusion category. Suppose that $\C'$ contains a Tannakian subcategory $\Rep(G)$ such that the de-equivarization $\C_G$ of $\C$ by $\Rep(G)$ is pointed and has odd dimension. Then $\C$ is equivalent to the category of representations of a semisimple quasitriangular Hopf algebra.
\end{theorem}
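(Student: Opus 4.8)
The plan is to exhibit a fiber functor $F\colon\C\to\vect$ and then invoke the reconstruction theorem. Because $\C$ is a fusion category, the resulting Hopf algebra $H=\End(F)$ is finite dimensional and semisimple with $\C\cong\Rep(H)$; and since $\C$ is braided while $F$ becomes the forgetful functor under this equivalence, the braiding is recorded by a universal $R$-matrix, so $H$ is quasitriangular. Thus the whole statement reduces to building a fiber functor on $\C$.

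First I would treat the quotient $\C_G$. Let $\mathcal{O}(G)\in\Rep(G)\subseteq\C$ be the regular algebra, so that $\C_G$ is its category of modules in $\C$ and the de-equivariantization functor $\Phi\colon\C\to\C_G$ appearing in the exact sequence $\Rep(G)\to\C\to\C_G$ is the free-module functor $X\mapsto\mathcal{O}(G)\otimes X$. Here the hypothesis $\Rep(G)\subseteq\C'$ does essential work: since $\mathcal{O}(G)$ lies in the M\"uger center, the double braiding of any $\mathcal{O}(G)$-module with $\mathcal{O}(G)$ is trivial, every module is local, and $\C_G$ inherits a braiding. Now $\C_G$ is a braided pointed fusion category of odd Frobenius-Perron dimension (equal to $\FPdim(\C)/|G|$), so Lemma \ref{lem1} applies: $\C_G\cong\vect_A^1$ for an abelian group $A$, and in particular $\C_G$ admits a fiber functor $F_0\colon\C_G\to\vect$.

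It remains to pull $F_0$ back along $\Phi$. The functor $\Phi$ is a tensor functor, being an arrow in the exact sequence of tensor categories; concretely, free induction along the commutative \'etale algebra $\mathcal{O}(G)$ is strong monoidal, via $\Phi(X)\otimes_{\mathcal{O}(G)}\Phi(Y)\cong\mathcal{O}(G)\otimes X\otimes Y\cong\Phi(X\otimes Y)$. It is also faithful: the unit object $\1$ is a direct summand of $\mathcal{O}(G)$, so the algebra unit $\1\to\mathcal{O}(G)$ is a split monomorphism, whence each $X\to\mathcal{O}(G)\otimes X$ is split mono and the induced maps on $\Hom$-spaces are injective. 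Consequently $F:=F_0\circ\Phi\colon\C\to\vect$ is a faithful tensor functor; as any additive functor between semisimple categories is exact, $F$ is a fiber functor, and the reconstruction theorem completes the argument.

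I expect the only genuinely load-bearing points to be the two just isolated: that $\Rep(G)\subseteq\C'$ upgrades $\C_G$ to a braided category, which is what licenses Lemma \ref{lem1} and forces the associator of the pointed quotient to be trivial; and that $\Phi$ is a faithful tensor functor, which is what transports a fiber functor downstairs back up to $\C$. Were $\Rep(G)$ only assumed Tannakian in $\C$ rather than central, $\C_G$ could be a pointed category carrying a nontrivial class $\omega\in H^3(A,k^\times)$ and admitting no fiber functor, and the construction would break; so the containment in the M\"uger center is precisely the hypothesis that makes the whole scheme go through.
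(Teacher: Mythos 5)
Your proposal is correct and takes essentially the same route as the paper: the paper likewise uses the hypothesis $\Rep(G)\subseteq\C'$ to make $\C_G$ braided, applies Lemma \ref{lem1} to get a fiber functor on $\C_G$, transports it back to $\C$ through the exact sequence $\Rep(G)\to\C\to\C_G$, and concludes by the reconstruction theorem. The only difference is one of detail: you spell out the monoidality and faithfulness of the de-equivariantization functor $\Phi$, which the paper leaves implicit in the single phrase ``hence $\C$ has a fiber functor.''
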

\begin{proof}
By assumption, we have an exact sequence of fusion categories
$$\Rep(G)\to\C\to\C_G.$$

Since $\Rep(G)$ is contained in the M\"{u}ger center $\C'$, the de-equivarization $\C_G$ is braided by \cite[Remark 2.3]{etingof2011weakly}. By Lemma \ref{lem1}, $\C_G$ has a fiber functor, and hence $\C$ has a fiber functor. By the reconstruction theorem for finite dimensional Hopf algebras, $\C$ is equivalent to the category of representations of a Hopf algebra $H$. Since $\C$ is braided and semisimple, $H$ is quasitriangular and semisimple.
\end{proof}

The notion of a group-theoretical fusion category was introduced in \cite{etingof2005fusion}. By \cite[Theorem 7.2]{naidu2009fusion}, a braided fusion category is group-theoretical if and only if it contains a Tannakian subcategory such that the corresponding de-equivariantization is pointed. Hence the fusion category $\C$ in Theorem \ref{thm1} is a special case of group-theoretical fusion categories. The following corollary shows that this special case really exists. When $n=1$, the following result has been proved by Brugui{\`e}res and Natale in \cite{bruguieres2011exact}.

\begin{corollary}\label{cor31}
Let $q>2$ be a prime number and $d$ be an odd square-free natural number. Suppose that $\C$ is a braided fusion category of Frobenius-Perron dimension $dq^n$ with $n\leq 3$. Then $\C$ is equivalent to the category of representations of a semisimple quasitriangular Hopf algebra.
\end{corollary}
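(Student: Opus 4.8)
The plan is to reduce the statement to an application of Theorem~\ref{thm1}: I must produce a Tannakian subcategory $\Rep(G)$ of the M\"uger center $\C'$ whose de-equivariantization $\C_G$ is pointed and of odd dimension. Several preliminary reductions are immediate. Since $\FPdim(\C)=dq^n$ is an integer, $\C$ is weakly integral; and since this number is odd, the elementary abelian $2$-group occurring in the faithful grading of a non-integral weakly integral braided fusion category \cite[Theorem 3.10]{gelaki2008nilpotent} must be trivial, so $\C$ is in fact \emph{integral}. By Lemma~\ref{lem401} and Theorem~\ref{thm403} it is also solvable. Finally, because $\FPdim(\C)$ is odd, the M\"uger center $\C'$ is a symmetric category of odd dimension, hence Tannakian \cite[Corollary 2.50]{drinfeld2010braided}, say $\C'\cong\Rep(G)$ with $|G|$ odd.

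Now set $\M:=\C_G$, the de-equivariantization of $\C$ by the whole M\"uger center. As in the proof of Theorem~\ref{thm403}, $\M$ is non-degenerate \cite[Remark 2.3]{etingof2011weakly}, and being weakly integral it is modular (Remark~\ref{rem1}); it is integral of odd dimension $dq^n/|G|$, which divides $dq^n$. Everything rests on the claim that $\M$ is \emph{pointed}: granting this, $\Rep(G)\subseteq\C'$ and $\C_G=\M$ is pointed of odd dimension, so Theorem~\ref{thm1} applies and exhibits $\C$ as $\Rep(H)$ for a semisimple quasitriangular $H$.

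To control the simple objects of $\M$, note that since $\M$ is modular, $\FPdim(X)^2$ divides $\FPdim(\M)$ for every simple $X$ \cite[Lemma 1.2]{etingof1998some}; as the prime-to-$q$ part of $\FPdim(\M)$ is square-free, $\FPdim(X)$ must be a power of $q$, and when the $q$-adic valuation of $\FPdim(\M)$ is at most $3$ only $\FPdim(X)\in\{1,q\}$ can occur (a dimension $q^2$ would require $q^4\mid\FPdim(\M)$). The engine for pointedness is the following assertion: a modular category $\N$ of dimension $q^k$ with $k\le 3$ is pointed. To prove it, let $A$ be the group of invertible objects and $\N_{\mathrm{ad}}$ the adjoint subcategory, so $|A|\cdot\FPdim(\N_{\mathrm{ad}})=q^k$ by M\"uger's centralizer formula. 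Counting dimensions over $\Irr(\N)$ gives $q^k=|A|+q^2 s$, with $s$ the number of non-invertible simples; since $|A|$ is a power of $q$ dividing $q^k$, a short case analysis leaves only $s=0$, together with the single exceptional configuration $k=3,\ |A|=q^2,\ \FPdim(\N_{\mathrm{ad}})=q$. That case is excluded as follows: for a non-invertible $X$, the object $X\otimes X^*$ lies in the pointed category $\N_{\mathrm{ad}}$, so it equals $\bigoplus_{g\in G[X]}g$ with $G[X]=\{g\in A:g\otimes X\cong X\}$ (each such $g$ fixes $X$ and occurs with multiplicity one); thus $|G[X]|=\FPdim(X)^2=q^2$, while $G[X]\subseteq\mathrm{Inv}(\N_{\mathrm{ad}})$ has order $q$ — a contradiction. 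Hence $\N$ is pointed.

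The remaining difficulty, and the main obstacle, is to pass from this prime-power statement to $\M$ itself, whose dimension also carries the square-free factor $d$. For this I would invoke the structural analysis of almost square-free categories in \cite{2016DongIntegral} and \cite{2016DongNatale} to conclude that the modular category $\M$ is \emph{nilpotent}; by the prime decomposition of nilpotent non-degenerate braided fusion categories it then splits as a Deligne product $\M\cong\M_q\boxtimes\M_{d'}$ into its $q$-primary and prime-to-$q$ parts, both modular. The factor $\M_{d'}$ has square-free dimension, so each of its simple objects $Y$ satisfies $\FPdim(Y)^2\mid\FPdim(\M_{d'})$ and is therefore invertible, i.e. $\M_{d'}$ is pointed; the factor $\M_q$ has dimension $q^k$ with $k\le 3$ and is pointed by the engine above. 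Hence $\M$ is pointed, completing the proof. I expect the crux to be precisely this pointedness of the modular core $\M$: the dimension bookkeeping disposes of low powers of $q$ directly, but eliminating the genuinely exceptional $q^3$ configuration (and, in case $q\mid d$, a $q^4$ configuration) is what requires the nilpotency/classification input from the companion papers.
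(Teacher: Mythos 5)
Your proposal is correct and, at the level of its skeleton, is the same proof as the paper's: deduce integrality from oddness via \cite[Corollary 3.11]{gelaki2008nilpotent}, recognize the M\"uger center as Tannakian via \cite[Corollary 2.50]{drinfeld2010braided}, de-equivariantize to a non-degenerate integral category of odd almost square-free dimension, establish that this modular core is \emph{pointed}, and invoke Theorem~\ref{thm1} (the paper treats the non-degenerate case separately via Lemma~\ref{lem1}, but your uniform treatment with possibly trivial $G$ is equivalent). The one genuine difference is how pointedness of the modular core $\M$ is obtained. The paper cites it outright as \cite[Corollary 3.3]{2016DongIntegral}, both for $\C$ itself in the non-degenerate case and for $\C_G$ in the degenerate case. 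You instead partially unpack that citation: your counting argument for a modular category of dimension $q^k$, $k\le 3$ (the relation $|A|\cdot\FPdim(\N_{\mathrm{ad}})=q^k$, the constraint $q^k=|A|+q^2s$, and the exclusion of the exceptional $|A|=q^2$ configuration because $X\otimes X^*$ would have to decompose multiplicity-free into the at most $q$ invertibles of $\N_{\mathrm{ad}}$) is correct and self-contained, as are the square-free-factor argument and the use of the prime decomposition of nilpotent non-degenerate braided categories from \cite{drinfeld2010braided}. But nilpotency of $\M$ itself, in the mixed dimension $d'q^k$, is precisely the hard content of the companion papers, and you defer it to \cite{2016DongIntegral, 2016DongNatale} exactly as the paper defers pointedness. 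So your proof does not remove the dependence on those papers; it replaces the citation ``pointed'' by the weaker citation ``nilpotent'' plus a correct derivation of the former from the latter (legitimate, since the cited corollary implies nilpotency, but logically this re-proves part of what the companion papers already establish). Your elementary $q^k$ engine is a nice addition, and your observation about the $q\mid d$ edge case (which pushes the relevant $q$-adic valuation to $4$) is a fair point -- note that the paper's own proof is equally silent on it.
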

\begin{proof}
Since the Frobenius-Perron dimension of $\C$ is odd, \cite[Corollary 3.11]{gelaki2008nilpotent} shows that $\C$ is an integral fusion category.

Suppose that $\C$ is non-degenerate. Then $\C$ is pointed by \cite[Corollary 3.3]{2016DongIntegral} and the theorem follows from Lemma \ref{lem1}.

Suppose that $\C$ is degenerate.  Then the M\"{u}ger center $\C'$ of $\C$ is not trivial. Since $\FPdim\C$ is odd, \cite[Corollary 2.50]{drinfeld2010braided} shows that $\C'\cong\Rep(G)$ is a Tannakian subcategory for some finite group $G$. Let $\C_G$ be the de-equivarization of $\C$ by $\Rep(G)$. By equation (\ref{eq112}), $\FPdim(\C_G)=d'q^{n'}$, where $d'$ is a square-free natural number dividing $d$ and $n'\leq 3$. By \cite[Corollary 4.31]{drinfeld2010braided}, $\C_G$ is a non-degenerate braided fusion category. Hence $\C_G$ is pointed, also by \cite[Corollary 3.3]{2016DongIntegral}. By Theorem \ref{thm1}, $\C$ is equivalent to the category of representations of a quasitriangular semisimple Hopf algebra.
\end{proof}

\medbreak
 A twist in a finite dimensional Hopf algebra $H$ is an invertible element $J\in H\otimes H$ such that
\begin{align*}
(\varepsilon\otimes \id)(J)&=(\id\otimes \varepsilon)(J)=1,
\\ (\Delta\otimes \id)(J)(J\otimes 1)&=(\id\otimes\Delta)(J)(1\otimes J).
\end{align*}

If $J\in H\otimes H$ is a twist then there is a new Hopf algebra $(H^J,\Delta^J,S^J)$, where $H^J=H$ as an algebra, comultiplication $\Delta^J(h)=J^{-1}\Delta(h)J$ and antipode $S^J(h)=v^{-1}S(h)v$ where $v=m(S\otimes \id)(J)$.

Two Hopf algebras $H$ and $H'$ are called twist equivalent if $H'\cong H^J$ for some twist $J$. It is well-known that $H$ and $H'$ are twist equivalent if  and only if $\Rep(H)$ is equivalent to $\Rep(H')$ as tensor category. Therefore, the class of semisimple or quasitriangular Hopf algebras is closed under twist.

\begin{corollary}\label{cor32}
Let $q>2$ be a prime number and $d$ be an odd square-free natural number. Suppose that $H$ is a semisimple quasitriangular Hopf algebra of dimension $dq^n$ with $n\leq 3$. Then

(1)\, $H$ is isomorphic to $k^G$ for some abelian group $G$;

(2)\, $H$ is twist equivalent to a Hopf algebra $H'$  which fits into a cocentral abelian exact sequence
\begin{equation}\label{eq30}
\begin{split}
k\to k^\Sigma\to H'\to k\Gamma\to k,
\end{split}
\end{equation}
where $\Sigma$ and $\Gamma$ are finite groups, and $\Sigma$ is abelian.
\end{corollary}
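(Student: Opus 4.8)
The plan is to split into two cases according to whether $\C:=\Rep(H)$ is non-degenerate or degenerate, mirroring the proof of Corollary \ref{cor31} but reading off the finer Hopf-algebraic structure in each case. At the outset I record that $\C$ is a braided fusion category of odd Frobenius-Perron dimension $dq^n$ with $n\leq 3$, hence integral by \cite[Corollary 3.11]{gelaki2008nilpotent}, and that by the discussion preceding the statement $H$ is twist equivalent to $H'$ precisely when $\Rep(H)\cong\Rep(H')$ as tensor categories; thus every structural claim may be extracted from $\C$ equipped with a suitable fiber functor. The non-degenerate case will yield conclusion (1) as an honest isomorphism, and the degenerate case will yield conclusion (2) up to twist.

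In the non-degenerate case I would argue for (1) directly. By \cite[Corollary 3.3]{2016DongIntegral} the category $\C$ is pointed, so every simple $H$-module is one-dimensional; since $H$ is semisimple this forces $H\cong\bigoplus_i\End(V_i)\cong k^{m}$ as an algebra, i.e. $H$ is commutative. A commutative semisimple Hopf algebra over $k$ has the form $k^{G}$ with $G=\Alg(H,k)$ a finite group, and the existence of a braiding on $\C$ makes the fusion ring $\Zz G$ commutative, forcing $G$ abelian. This gives $H\cong k^{G}$ \emph{as Hopf algebras}, not merely a twist equivalence, because commutativity of $H$ is detected on the nose.

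For the degenerate case I would produce (2). Here $\C'\neq\vect$, and since $\FPdim\C$ is odd, \cite[Corollary 2.50]{drinfeld2010braided} gives $\C'\cong\Rep(G)$ Tannakian for a finite group $G$. De-equivariantizing, $\C_G$ has $\FPdim\C_G=dq^n/|G|=d'q^{n'}$ with $d'\mid d$ and $n'\leq 3$, is non-degenerate by \cite[Corollary 4.31]{drinfeld2010braided}, and is therefore pointed again by \cite[Corollary 3.3]{2016DongIntegral}; Lemma \ref{lem1} then identifies $\C_G\cong\vect_\Sigma^{1}\cong\Rep(k^\Sigma)$ with $\Sigma$ abelian. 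Thus I obtain an exact sequence of fusion categories $\Rep(G)\to\C\to\C_G$ in which $\Rep(G)$ sits inside the M\"{u}ger center $\C'$ and the cokernel $\C_G$ is pointed, and by Theorem \ref{thm1} the category $\C$ has a fiber functor, so $\C\cong\Rep(H')$ for a semisimple quasitriangular $H'$ twist equivalent to $H$. The subcategory $\Rep(G)=\Rep(kG)$ corresponds to a quotient Hopf map $H'\twoheadrightarrow kG$, and since the cokernel category is $\Rep(k^\Sigma)$ the kernel is $k^\Sigma$; setting $\Gamma=G$ this reads as $k\to k^\Sigma\to H'\to k\Gamma\to k$, which is abelian because $k^\Sigma$ is commutative and $k\Gamma$ cocommutative with $\Sigma$ abelian.

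The main obstacle, and the step I would spend most care on, is the last one: upgrading the exact sequence of fusion categories to a genuine exact sequence of Hopf algebras and checking that it is cocentral. For the first I would invoke the correspondence of \cite{bruguieres2011exact} between exact sequences of tensor categories whose outer terms carry fiber functors and Hopf-algebra extensions, which manufactures the fiber functor of Theorem \ref{thm1} compatibly with both $H'\twoheadrightarrow kG$ and $k^\Sigma\hookrightarrow H'$. Cocentrality should then follow from the fact that $\Rep(G)\subseteq\C'$: the objects of $\Rep(G)$ centralize all of $\C$, so the monodromy acts trivially on them, and this transparency is exactly the condition translating into centrality of the dual inclusion $k^G\hookrightarrow (H')^{*}$, i.e. cocentrality of $H'\to k\Gamma$ (compare Remark \ref{rem0}). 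Pinning down this translation, rather than the dimension bookkeeping, is where the real content of (2) lies.
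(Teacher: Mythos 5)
Part (1) of your proposal is correct, and in fact sharper than the paper's treatment: you get the isomorphism $H\cong k^G$ directly from pointedness (all simples one-dimensional $\Rightarrow$ $H$ commutative), whereas the paper deduces it from Lemma \ref{lem1} and leaves implicit the fact that a twist of a commutative Hopf algebra is isomorphic to it. Your setup for part (2) — $\C'\cong\Rep(\Gamma)$ Tannakian, $\C_\Gamma$ non-degenerate, pointed, $\cong\Rep(k^\Sigma)$ with $\Sigma$ abelian — also coincides with the paper's. Where the two proofs diverge is the last step: the paper records the equivariantization $\C\cong(\Rep(k^\Sigma))^\Gamma$ and then invokes \cite[Proposition 4.2]{natale2011semisimple}, which converts an equivariantization, in one stroke, into a twist-equivalent Hopf algebra fitting into a \emph{cocentral} abelian exact sequence. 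You instead reconstruct an exact sequence of Hopf algebras via Brugui\`eres--Natale and then try to obtain cocentrality separately, ``because $\Rep(\Gamma)\subseteq\C'$.''

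That last step is a genuine gap, and the mechanism you propose for it is false as stated. Cocentrality is not a property of the braided category $\C$: it is not invariant under twisting, so it cannot be read off from transparency (a purely braided-categorical condition) without specifying \emph{which} Hopf-algebraic realization, i.e.\ which fiber functor, one uses. Concretely, take $p$ odd, $G=\mathbb{H}\times\Zz_p$ with $\mathbb{H}=\langle x,y,z\rangle$ the Heisenberg group of order $p^3$ and $\Zz_p=\langle w\rangle$; let $B=\langle x,w\rangle\cong\Zz_p^2$, let $J\in kB\otimes kB$ be the twist attached to a non-degenerate $2$-cocycle on $\widehat{B}$, and let $N=\langle x,z\rangle$, $\Gamma=G/N\cong\Zz_p^2$. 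Then $H=(kG)^J$ is semisimple quasitriangular with $R=J_{21}^{-1}J$, and $\Rep(H)\cong\Rep(G)$ is \emph{symmetric}, so the subcategory $\Rep(\Gamma)$ coming from the quotient $\pi:H\to k\Gamma$ lies in the M\"uger center and is transparent; yet $\pi$ is \emph{not} cocentral: cocentrality here is equivalent to $(\pi\otimes\id)(JJ_{21}^{-1})=\sum_{n}e_{n}\otimes x^{-n}$ commuting with $\bar{y}\otimes y$, which fails because $x$ and $y$ do not commute. (This does not contradict the corollary — one can still find a \emph{different} member of the twist class of $H$ with a cocentral sequence — but it kills the claim that transparency ``is exactly'' cocentrality of the dual inclusion.) So the real content of part (2) is precisely the implication ``$\Rep(H)$ is a $\Gamma$-equivariantization $\Rightarrow$ some twist-equivalent $H'$ admits a cocentral exact sequence,'' and this must either be proved by analyzing the reconstruction attached to the fiber functor that factors through $\C_\Gamma$ (showing the compatibility you allude to actually forces cocentrality), or simply cited, as the paper does, from \cite[Proposition 4.2]{natale2011semisimple}. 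As written, your proof asserts this step rather than proving it, and the heuristic offered in its place is incorrect.
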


\begin{proof}
Let $\C=\Rep(H)$ be the category of finite dimensional representations of $H$. Then $\C$ is a braided fusion category. If $H$ is factorizable then $\C$ is non-degenerate, and hence it is pointed \cite[Corollary 3.3]{2016DongIntegral}. Then part (1) follows from Lemma \ref{lem1}.

Assume that $H$ is not factorizable then $\C$ is degenerate. By the proof of Corollary \ref{cor32}, the M\"{u}ger center of $\C$ is a Tannakian subcategory. Hence there exists a finite group $\Gamma$ with odd order such that $\Rep(\Gamma)$ is the M\"{u}ger center of $\C$. Let $\C_{\Gamma}$ be the de-equivarization of $\C$ by $\Rep(\Gamma)$. Then $\C\cong (\C_{\Gamma})^{\Gamma}$ is an $\Gamma$-equivarization of $\C_{\Gamma}$ by the discussion in Subsection \ref{sec23}. Also by the proof of Corollary \ref{cor31}, $\C_{\Gamma}$ is pointed and has a fiber functor, and hence $\C_{\Gamma}$ is equivalent to $\Rep(k^{\Sigma})$ for some abelian group $\Sigma$ by Lemma \ref{lem1}. Therefore, $\C\cong (\Rep(k^{\Sigma}))^{\Gamma}$. It follows from \cite[Proposition 4.2]{natale2011semisimple} that $H$ is twist equivalent to a Hopf algebra $H'$ which fits into a cocentral abelian exact sequence as described.
\end{proof}

\begin{remark}\label{rem2}
When $n=1$, one more precise classification result is obtained by Natale \cite{natale2006r-matrices}. Let $H$ be a quasitriangular Hopf algebra. Assume that ${\rm dim}H$ is odd and square-free. Natale proves that $H$ is semisimple and isomorphic to a group algebra.

Since the class of quasitriangular Hopf algebras is closed under twist, the Hopf algebra $H'$ in Corollary \ref{cor32} is also quasitriangular. In addition, $H'$ is a bicrossed product ${k^{\Sigma}}^{\tau}\#_{\sigma}k{\Gamma}$ for some $\tau$ and $\sigma$. Because the exact sequence (\ref{eq30}) is cocentral, $H'$ is a crossed product ${k^{\Sigma}}\#_{\sigma}k{\Gamma}$ as an algebra by \cite[Lemma 3.3]{natale2008hopf}.
\end{remark}

\section{Acknowledgements}
The research of the authors was partially supported by the Fundamental Research Funds for the Central Universities (KYZ201564), the Natural Science Foundation of China (11571173, 11201231) and the Qing Lan Project.

%-----BIBLIOGRAPHY--------------------------

% Used on Computer
%\bibliographystyle{C:/Users/Dong/OneDrive/Documents/references/bibstyles/elsart-num-sort}
%\bibliography{C:/Users/Dong/OneDrive/Documents/references/dongrefs}

% Used on Laptop
%\bibliographystyle{C:/Users/thinkpad/SkyDrive/Documents/references/bibstyles/elsart-num-sort}
%\bibliography{C:/Users/thinkpad/SkyDrive/Documents/references/dongrefs}

\begin{thebibliography}{10}
\expandafter\ifx\csname url\endcsname\relax
  \def\url#1{\texttt{#1}}\fi
\expandafter\ifx\csname urlprefix\endcsname\relax\def\urlprefix{URL }\fi

\bibitem{bruguieres2011exact}
A.~Brugui{\`e}res, S.~Natale, Exact sequences of tensor categories, Internat.
  Math. Res. Notices 2011~(24) (2011) 5644--5705.

\bibitem{dong2014existence}
J.~Dong, L.~Dai, Existence of {T}annakian subcategories and its applications,
  Commun. Algebra 44~(4) (2016) 1767--1782.

\bibitem{2016DongIntegral}
J.~Dong, L.~Li, L.~Dai, Integral almost square-free modular categories, J.
  Algebra Appl. 16~(5) (2017) 1750104.

\bibitem{2016DongNatale}
J.~Dong, S.~Natale, On the classification of almost square-free modular
  categories, preprint arXiv:1612.09112.

\bibitem{drinfeld2010braided}
V.~Drinfeld, S.~Gelaki, D.~Nikshych, V.~Ostrik, On braided fusion categories
  {I}, Selecta Math., New Ser. 16~(1) (2010) 1--119.

\bibitem{etingof1998some}
P.~Etingof, S.~Gelaki, Some properties of finite-dimensional semisimple {H}opf
  algebras, Math. Res. Lett. 5~(2) (1998) 191--197.

\bibitem{egno2015}
P.~Etingof, S.~Gelaki, D.~Nikshych, V.~Ostrik, Tensor Categories, Mathematical
  surveys and monographs , vol. 205, American Mathematical Society, 2015.

\bibitem{etingof2005fusion}
P.~Etingof, D.~Nikshych, V.~Ostrik, On fusion categories, Ann. Math. 162~(2)
  (2005) 581--642.

\bibitem{etingof2011weakly}
P.~Etingof, D.~Nikshych, V.~Ostrik, Weakly group-theoretical and solvable
  fusion categories, Adv. Math. 226~(1) (2011) 176--205.

\bibitem{frohlich1993}
J.~Fr\"{o}hlich, T.~Kerler, Quantum Groups, Quantum Categories and Quantum
  Field Theory, Lecture Notes in Mathematics, vol. 1542, Berlin: Springer,
  1993.

\bibitem{gelaki2008nilpotent}
S.~Gelaki, D.~Nikshych, Nilpotent fusion categories, Adv. Math. 217~(3) (2008)
  1053--1071.

\bibitem{kassel1995quantum}
C.~Kassel, Quantum groups, {GTM} 155 (1995).

\bibitem{1993Montgomery}
S.~Montgomery, {H}opf algebras and their actions on rings, CBMS Regional
  Conference Series in Mathematics, vol. 82, American Mathematical Society,
  Providence, RI, 1993.

\bibitem{naidu2009fusion}
D.~Naidu, D.~Nikshych, S.~Witherspoon, Fusion subcategories of representation
  categories of twisted quantum doubles of finite groups, Internat. Math. Res.
  Notices 2009~(22) (2009) 4183--4219.

\bibitem{natale1999semisimple}
S.~Natale, On semisimple {H}opf algebras of dimension $ pq^2$, J. Algebra
  221~(1) (1999) 242--278.

\bibitem{natale2006r-matrices}
S.~Natale, R-matrices and {H}opf algebra quotients, Internat. Math. Res.
  Notices 2006 (2006) 1--18.

\bibitem{natale2008hopf}
S.~Natale, Hopf algebra extensions of group algebras and {T}ambara-{Y}amagami
  categories, Algebr Represent Theor 13~(6) (2010) 673--691.

\bibitem{natale2011semisimple}
S.~Natale, Semisimple {H}opf algebras and their representations, Publ. Mat.
  Urug. (2011) 123.

\bibitem{natale2013weakly}
S.~Natale, On weakly group-theoretical non-degenerate braided fusion
  categories, J. Noncommut. Geom. 8~(4) (2014) 1043--1060.

\bibitem{Ostrik2003}
V.~Ostrik, Module categories over the {D}rinfeld double of a finite group,
  Internat. Math. Res. Notices 2003~(27) (2003) 1507--1520.

\bibitem{Turaer1994}
V.~Turaer, Quantum Invariants of Knots and 3-Manifolds, de Gruyter Stud. Math.,
  vol. 18, de Gruyter, Berlin, 1994.
\end{thebibliography}
\end{document}